\newtheorem{theorem}{Theorem}[section]
\newtheorem{lemma}[theorem]{Lemma}
\theoremstyle{definition}
\theoremstyle{remark}
\begin{document}

\title{About some identities for Bessel polynomials}
\author[Leveque]{Olivier L\'{e}v\^{e}que}
\address{L.T.H.I., E.P.F.L., Lausanne, Switzerland}
\email{olivier.leveque@epfl.ch}
\author[C. Vignat]{Christophe Vignat}
\address{Department of Mathematics,
Tulane University, New Orleans, LA 70118 and 
L.S.S. Supelec, Universite d'Orsay, France}
\email{vignat@tulane.edu}
\maketitle

\begin{abstract}
In a recent paper, Yu. A. Brychkov derived a series of identities for multiples sums of special functions, using generating functions. Among these identities, a particularly interesting one involves multiples sums of Bessel $I_{\nu}$ functions with half-integer indices. We derive here some equivalent identities that involve different kinds of Bessel polynomials, sing a probabilistic approach based on the properties of the Generalized Inverse Gaussian probability density.
\end{abstract}

\section{Introduction}

In a recent paper \cite{Brychkov}, using generating functions,
Yu. A. Brychkov derived the following identity
\begin{gather}
\sum_{k_{1}+\dots+k_{m}=n}\prod_{i=1}^{m}\frac{1}{k_{i}!}\left[I_{-k_{i}-\frac{1}{2}}\left(z\right)-I_{k_{i}+\frac{1}{2}}\left(z\right)\right]=\left(-1\right)^{n}\frac{m^{\frac{1}{2}}}{n!}\pi^{\frac{1-m}{2}}\left(\frac{z}{2}\right)^{\frac{1-m}{2}-n}\label{eq:Brychkov}\\
\times\sum_{k=0}^{n}\binom{n}{k}\left(\frac{m-1}{2}\right)_{n-k}\left(-\frac{mz}{2}\right)^{k}\left[I_{-k-\frac{1}{2}}\left(mz\right)-I_{k+\frac{1}{2}}\left(mz\right)\right].\nonumber 
\end{gather}
where $I_{k}\left(z\right)$ is the modified Bessel function of the
first kind. Our aim here is to show that this formula is equivalent to several simple identities involving various kinds of Bessel polynomials, and to provide some
extended versions of it. Our approach involves a probabilistic interpretation of these identities which does not require the computation of any generating function.

Let us first remark that, from \cite[p. 675]{Brychkovbook},
\[
I_{-k-\frac{1}{2}}\left(z\right)-I_{k+\frac{1}{2}}\left(z\right)=\frac{2}{\pi}\left(-1\right)^{k}K_{k+\frac{1}{2}}\left(z\right)
\]
where $K_{k}\left(z\right)$ is the modified Bessel function of the
third kind or Macdonald function. For $k$ integer, this function
is related to the Bessel polynomial $q_{k}\left(z\right)$ of degree $k$ as follows
\begin{equation}
\exp\left(-z\right)q_{k}\left(z\right)=\frac{2^{\frac{1}{2}-k}}{\Gamma\left(k+\frac{1}{2}\right)}z^{k+\frac{1}{2}}K_{k+\frac{1}{2}}\left(z\right);\,\, z\ge0\label{eq:Bessel}
\end{equation}
where the Bessel polynomial $q_{k}\left(z\right)$ is defined by 
\[
q_{k}\left(z\right)=\sum_{l=0}^{k}\frac{\binom{k}{l}}{\binom{2k}{l}}\frac{\left(2z\right)^{l}}{l!}.
\]
First examples of these Bessel polynomials are
\[
q_{0}\left(z\right)=1,\,\, q_{1}\left(z\right)=1+z,\,\, q_{2}\left(z\right)=1+z+\frac{z^{2}}{3}.
\]
These polynomials satisfy the normalization constraint
\[
q_{n}\left(0\right)=1.
\]
Replacing the Bessel functions $I_{k}\left(z\right)$ by their expression
in terms of Bessel polynomials yields, after some elementary algebra, the equivalent
and more compact version of identity (\ref{eq:Brychkov})
\begin{equation}
\sum_{k_{1}+\dots+k_{m}=n}\prod_{i=1}^{m}\binom{2k_{i}}{k_{i}}q_{k_{i}}\left(z\right)=\sum_{k=0}^{n}\binom{2k}{k}2^{2n-2k}\frac{\left(\frac{m-1}{2}\right)_{n-k}}{\left(n-k\right)!}q_{k}\left(mz\right).\label{eq:convolutionform}
\end{equation}
Moreover, replacing the Bessel functions $I_{k}\left(z\right)$ by
their expression in terms of Bessel $K_{k}\left(z\right)$ functions
yields the equivalent identity
\[
\sum_{k_{1}+\dots+k_{m}=n}\prod_{i=1}^{m}\frac{z^{k_{i}+\frac{1}{2}}K_{k_{i}+\frac{1}{2}}(z)}{k_{i}!}=\left(\frac{\sqrt{\pi}}{2}\right)^{m-1}\sum_{k=0}^{n}\frac{\left(\frac{m-1}{2}\right)_{n-k}}{\left(n-k\right)!}\frac{\left(\frac{mz}{2}\right)^{k+\frac{1}{2}}K_{k+\frac{1}{2}}\left(mz\right)}{k!}.
\]

\section{A probabilistic approach to identity (\ref{eq:convolutionform})}

We show here that identity (\ref{eq:convolutionform}) can be interpreted
in a probabilistic setting; this interpretation relies on
properties of the generalized inverse Gaussian distribution, defined as \cite{Jorgensen}
\begin{equation}
f\left(x;\psi,\chi,\lambda\right)=\frac{\left(\frac{\psi}{\chi}\right)^{\frac{\lambda}{2}}}{2K_{\lambda}\left(\sqrt{\psi\chi}\right)}x^{\lambda-1}\exp\left(-\frac{\chi}{2} x^{-1}-\frac{\psi}{2} x\right),\,\, x,\psi,\chi>0,\,\,\lambda>-1.\label{eq:GIG}
\end{equation}

In order to explicit this probabilistic interpretation, we need the following preliminary results. 
This first lemma can be found in \cite{Jorgensen}. 
\begin{lemma}
\cite{Jorgensen}\label{lem:stability} Let $X_{-\frac{1}{2},z_{1}}$
and $X_{-\frac{1}{2},z_{2}}$ two independent random variables with
generalized inverse distribution as in (\ref{eq:GIG}) with parameters
$\psi=1,\,\,\chi=z^{2}$ and $lambda=-\frac{1}{2},$ then
\begin{equation}
X_{-\frac{1}{2},z_{1}}+X_{-\frac{1}{2},z_{2}}\sim X_{-\frac{1}{2},z_{1}+z_{2}}.\label{eq:stability}
\end{equation} where the sign $\sim$ denotes identity in distribution.
Moreover, 
\begin{equation}
X_{-\frac{1}{2},z}+X_{\frac{1}{2},0}\sim X_{\frac{1}{2},z}.\label{eq:decomposition}
\end{equation}

\end{lemma}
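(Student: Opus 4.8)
The plan is to prove both assertions by computing the Laplace transform of the relevant laws and reading off their structure, rather than manipulating densities directly. Everything rests on the classical integral representation of the Macdonald function,
\[
\int_0^\infty x^{\nu-1}\exp\left(-\frac{a}{x}-bx\right)\,dx = 2\left(\frac{a}{b}\right)^{\frac{\nu}{2}}K_\nu\left(2\sqrt{ab}\right),\qquad a,b>0,
\]
which at once certifies that the density (\ref{eq:GIG}) integrates to one and, by shifting $b$, produces Laplace transforms.

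First I would take $\lambda=-\frac{1}{2}$, $\psi=1$, $\chi=z^2$ and evaluate $E\!\left[e^{-sX_{-\frac{1}{2},z}}\right]$ by replacing the factor $e^{-\frac{1}{2}x}$ in the density with $e^{-\left(\frac{1}{2}+s\right)x}$ and applying the integral above with $a=\frac{z^2}{2}$ and $b=\frac{1}{2}+s$. Inserting the explicit closed form $K_{\pm\frac{1}{2}}(w)=\sqrt{\frac{\pi}{2w}}\,e^{-w}$, the algebraic prefactors cancel and the ratio of Bessel values collapses to a pure exponential, leaving
\[
E\left[e^{-sX_{-\frac{1}{2},z}}\right]=\exp\left(-z\left(\sqrt{1+2s}-1\right)\right).
\]
Since this is multiplicative in the parameter $z$, the stability relation (\ref{eq:stability}) is immediate: the transform of $X_{-\frac{1}{2},z_1}+X_{-\frac{1}{2},z_2}$ factorizes as $\exp(-z_1(\sqrt{1+2s}-1))\,\exp(-z_2(\sqrt{1+2s}-1))$, which is exactly the transform of $X_{-\frac{1}{2},z_1+z_2}$, and uniqueness of Laplace transforms on $(0,\infty)$ closes the argument.

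For the decomposition (\ref{eq:decomposition}) I would first fix the meaning of $X_{\frac{1}{2},0}$: letting $\chi\to0^{+}$ in (\ref{eq:GIG}) with $\lambda=\frac{1}{2}$, $\psi=1$ degenerates the density to the Gamma$\left(\frac{1}{2},\frac{1}{2}\right)$ law proportional to $x^{-1/2}e^{-x/2}$, whose Laplace transform is $(1+2s)^{-1/2}$. Repeating the computation above with $\lambda=+\frac{1}{2}$ gives $E[e^{-sX_{\frac{1}{2},z}}]=(1+2s)^{-1/2}\exp(-z(\sqrt{1+2s}-1))$, so the product of the transforms of $X_{-\frac{1}{2},z}$ and $X_{\frac{1}{2},0}$ reproduces precisely the transform of $X_{\frac{1}{2},z}$, again yielding equality in distribution.

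The one genuinely delicate point is the degenerate parameter $\chi=0$ in $X_{\frac{1}{2},0}$, where the normalizing constant of (\ref{eq:GIG}) is singular as $\sqrt{\psi\chi}\to0$; I expect the main care to go into justifying the $\chi\to0$ limit (or, equivalently, into directly confirming that the Gamma$\left(\frac{1}{2},\frac{1}{2}\right)$ density is the correct limiting object and that the transform computation remains valid there), since the two transform evaluations themselves are essentially identical and routine once the $K_{\pm\frac{1}{2}}$ closed form is invoked.
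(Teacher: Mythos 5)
Your proof is correct. Note, however, that the paper itself offers no proof of this lemma at all: it is simply quoted from J{\o}rgensen's monograph, so there is no in-paper argument to compare against. Your Laplace-transform computation is the standard self-contained way to establish both claims, and the details check out: with $\psi=1$, $\chi=z^{2}$ the closed form $K_{\pm\frac{1}{2}}(w)=\sqrt{\pi/(2w)}\,e^{-w}$ does collapse the transform of $X_{-\frac{1}{2},z}$ to $\exp\bigl(-z(\sqrt{1+2s}-1)\bigr)$ and that of $X_{\frac{1}{2},z}$ to $(1+2s)^{-\frac{1}{2}}\exp\bigl(-z(\sqrt{1+2s}-1)\bigr)$, from which (\ref{eq:stability}) and (\ref{eq:decomposition}) follow by multiplicativity and uniqueness of Laplace transforms on $(0,\infty)$. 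Your flagged worry about the degenerate case $\chi=0$ is legitimate but easily dispatched exactly as you suggest, and is consistent with the paper, which explicitly identifies $X_{\frac{1}{2},0}$ with the Gamma law of shape $\frac{1}{2}$ and scale $2$, whose transform is $(1+2s)^{-\frac{1}{2}}$; one can simply take this as the definition of $X_{\frac{1}{2},0}$ rather than justify a limit of normalizing constants. The only stylistic remark is that your argument is an analytic proof of facts the authors deliberately treat as known inputs (their probabilistic machinery starts from this lemma), so your contribution is to make the paper self-contained rather than to replicate or diverge from an existing argument.
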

From these two results, we can deduce the following identity, that
can also be found in \cite{Jorgensen}
\begin{lemma}
\label{lem:samedistribution}Let $X_{\frac{1}{2},z_{1}}$ and $X_{\frac{1}{2},z_{2}}$
two independent random variables with generalized inverse Gaussian
distribution as in (\ref{eq:GIG}) with parameters $\psi=1,\,\,\chi=z^{2},\,\,\lambda=\frac{1}{2},$
then the identity in distribution
\begin{equation}
X_{\frac{1}{2},z_{1}}+X_{\frac{1}{2},z_{2}}\sim X_{\frac{1}{2},0}+X_{\frac{1}{2},z_{1}+z_{2}}\label{eq:convolution1/2}
\end{equation}
holds, where $X_{\frac{1}{2},z_{1}+z_{2}}$ is independent of $X_{\frac{1}{2},0}$.
\end{lemma}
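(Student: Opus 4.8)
The goal is to prove the identity in distribution
\[
X_{\frac{1}{2},z_{1}}+X_{\frac{1}{2},z_{2}}\sim X_{\frac{1}{2},0}+X_{\frac{1}{2},z_{1}+z_{2}},
\]
and the natural strategy is to decompose each $\lambda=\tfrac12$ variable into a $\lambda=-\tfrac12$ part plus an independent $X_{\frac12,0}$ part, using the decomposition relation \eqref{eq:decomposition} from Lemma~\ref{lem:stability}. The plan is to work entirely at the level of sums of independent variables, invoking the two building blocks already established: the stability relation \eqref{eq:stability} for the $\lambda=-\tfrac12$ family under addition, and the decomposition \eqref{eq:decomposition} that trades an $X_{-\frac12,z}$ summand for an $X_{\frac12,z}$ summand at the cost of an extra independent $X_{\frac12,0}$.

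First I would rewrite the left-hand side. Applying \eqref{eq:decomposition} to each of the two independent summands, I can represent $X_{\frac12,z_1}$ as $X_{-\frac12,z_1}+X_{\frac12,0}$ and $X_{\frac12,z_2}$ as $X_{-\frac12,z_2}+X'_{\frac12,0}$, where all four pieces are taken mutually independent. Thus the left-hand side has the same distribution as $X_{-\frac12,z_1}+X_{-\frac12,z_2}+X_{\frac12,0}+X'_{\frac12,0}$. Next I would collapse the two $\lambda=-\tfrac12$ pieces using the stability relation \eqref{eq:stability}, which gives $X_{-\frac12,z_1}+X_{-\frac12,z_2}\sim X_{-\frac12,z_1+z_2}$, so the left-hand side becomes $X_{-\frac12,z_1+z_2}+X_{\frac12,0}+X'_{\frac12,0}$ in distribution. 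Finally I would run \eqref{eq:decomposition} in the forward direction once more, recombining $X_{-\frac12,z_1+z_2}$ with one of the $X_{\frac12,0}$ copies to obtain $X_{\frac12,z_1+z_2}$, leaving $X_{-\frac12,z_1+z_2}+X_{\frac12,0}\sim X_{\frac12,z_1+z_2}$ and hence the left-hand side $\sim X_{\frac12,z_1+z_2}+X'_{\frac12,0}$, which is exactly the right-hand side.

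The main point requiring care is the bookkeeping of independence: each application of \eqref{eq:decomposition} introduces a \emph{fresh} independent $X_{\frac12,0}$ factor, and I must be sure that when I apply the stability relation \eqref{eq:stability} to the two $X_{-\frac12,\cdot}$ pieces, those pieces are genuinely independent of each other and of the auxiliary $X_{\frac12,0}$ terms. Since identity in distribution is preserved under adding an independent summand common to both sides, each step is legitimate as long as the newly introduced variables are chosen independent of everything already present; I would state this explicitly rather than leave it implicit. I expect no serious obstacle beyond this careful tracking of which variables are independent at each stage, as the entire argument is a three-move chain of the two lemma relations and requires no density computation whatsoever.
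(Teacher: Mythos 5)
Your proposal is correct and follows exactly the same three-step route as the paper: apply the decomposition \eqref{eq:decomposition} to each summand, merge the two $\lambda=-\tfrac12$ pieces via the stability relation \eqref{eq:stability}, then recombine with one $X_{\frac12,0}$ via \eqref{eq:decomposition}. Your explicit attention to the independence bookkeeping is a slight elaboration of what the paper states in one line, but the argument is identical.
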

\begin{proof}
We remark from (\ref{eq:decomposition}) that
\[
X_{\frac{1}{2},z_{1}}+X_{\frac{1}{2},z_{2}}\sim\left(X_{-\frac{1}{2},z_{1}}+X_{\frac{1}{2},0}\right)+\left(X_{-\frac{1}{2},z_{2}}+\tilde{X}_{\frac{1}{2},0}\right)
\]
where the random variables on the right-hand side are mutually independent.
By the stability property (\ref{eq:stability}), we obtain
\begin{eqnarray*}
X_{\frac{1}{2},z_{1}}+X_{\frac{1}{2},z_{2}} & \sim & X_{-\frac{1}{2},z_{1}+z_{2}}+X_{\frac{1}{2},0}+\tilde{X}_{\frac{1}{2},0}\\
 & \sim & X_{\frac{1}{2},z_{1}+z_{2}}+X_{\frac{1}{2},0}
\end{eqnarray*}
where we have used again the property (\ref{eq:decomposition}), hence
the result.
\end{proof}
We note that the probability density of $X_{\frac{1}{2},0}$ is chi-square density with one degree of freedom, or equivalently a Gamma density with scale parameter $2$ and shape parameter $1/2$: 
\[
f_{X_{\frac{1}{2},0}}(x)=\frac{1}{2 \sqrt{\pi}} \left(\frac{x}{2}\right)^{-\frac{1}{2}} \exp\left(-\frac{x}{2}\right).
\]

The link between the Generalized Inverse Gaussian random variables
and the Bessel polynomials is characterized as follows.

\begin{lemma}
Given $\nu \in \mathbb{R}$, the $\nu-$th moment  of $X_{\frac{1}{2},z}$ is 
\begin{equation}
\mathbb{E}X_{\frac{1}{2},z}^{\nu}=\sqrt{\frac{2}{\pi}}\exp\left(z\right)z^{\nu+\frac{1}{2}}K_{\nu+\frac{1}{2}}\left(z\right).\label{eq:moment-2}
\end{equation}

When $\nu=n$ is an integer, this formula simplifies to
\begin{equation}
\mathbb{E}X_{\frac{1}{2},z}^{n}=\frac{1}{2^{n}}\frac{\left(2n\right)!}{n!}q_{n}\left(z\right)\label{eq:moment}
\end{equation}
where $q_{n}\left(z\right)$ is the Bessel polynomial of degree $n.$
\end{lemma}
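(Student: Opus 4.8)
The plan is to compute the moment directly from the density of $X_{\frac{1}{2},z}$ and then recognize the resulting integral as an integral representation of the Macdonald function. Specializing the definition (\ref{eq:GIG}) to $\psi=1$, $\chi=z^{2}$, $\lambda=\frac{1}{2}$ (so that $\sqrt{\psi\chi}=z$ and $(\psi/\chi)^{\lambda/2}=z^{-1/2}$), the density becomes
\[
f_{X_{\frac{1}{2},z}}(x)=\frac{z^{-1/2}}{2K_{\frac{1}{2}}(z)}\,x^{-1/2}\exp\left(-\frac{z^{2}}{2x}-\frac{x}{2}\right),\qquad x>0,
\]
so that the $\nu$-th moment reads
\[
\mathbb{E}X_{\frac{1}{2},z}^{\nu}=\frac{z^{-1/2}}{2K_{\frac{1}{2}}(z)}\int_{0}^{\infty}x^{\nu-1/2}\exp\left(-\frac{z^{2}}{2x}-\frac{x}{2}\right)dx.
\]

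First I would invoke the classical integral representation
\[
\int_{0}^{\infty}x^{\mu-1}\exp\left(-\frac{\beta}{x}-\gamma x\right)dx=2\left(\frac{\beta}{\gamma}\right)^{\mu/2}K_{\mu}\left(2\sqrt{\beta\gamma}\right),
\]
applied with $\mu=\nu+\frac{1}{2}$, $\beta=\frac{z^{2}}{2}$, $\gamma=\frac{1}{2}$; this gives $2\sqrt{\beta\gamma}=z$ and $(\beta/\gamma)^{\mu/2}=z^{\nu+1/2}$, so the integral equals $2z^{\nu+1/2}K_{\nu+1/2}(z)$. After cancellation of the factor $2z^{-1/2}$ this leaves the compact ratio $\mathbb{E}X_{\frac{1}{2},z}^{\nu}=z^{\nu}K_{\nu+1/2}(z)/K_{1/2}(z)$. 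Substituting the elementary closed form $K_{1/2}(z)=\sqrt{\pi/(2z)}\,e^{-z}$ and collecting the powers of $2$, $\pi$ and $z$ then yields (\ref{eq:moment-2}) directly.

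For the integer specialization I would feed (\ref{eq:moment-2}) into the defining relation (\ref{eq:Bessel}), rewritten as $z^{n+1/2}K_{n+1/2}(z)=2^{n-1/2}\,\Gamma\left(n+\tfrac{1}{2}\right)e^{-z}q_{n}(z)$. The exponentials $e^{z}$ and $e^{-z}$ cancel, and all that remains is to tidy the constant $\sqrt{2/\pi}\cdot 2^{n-1/2}\,\Gamma\left(n+\tfrac{1}{2}\right)$. Using the half-integer value $\Gamma\left(n+\tfrac{1}{2}\right)=\frac{(2n)!}{4^{n}n!}\sqrt{\pi}$ collapses this to $\frac{1}{2^{n}}\frac{(2n)!}{n!}$, giving (\ref{eq:moment}).

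The computation is essentially routine; the only place demanding care is the bookkeeping of the several multiplicative constants (the $z^{-1/2}$, the leading factor $2$, the $\sqrt{2/\pi}$, and the power of $2$ hidden inside $\Gamma(n+\tfrac{1}{2})$), where it is easy to drop a factor and miss the clean final form. Conceptually the single substantive ingredient is the integral representation of $K_{\mu}$ above; everything else is algebra. One could alternatively quote the standard general GIG moment formula $\mathbb{E}X^{\nu}=(\chi/\psi)^{\nu/2}K_{\lambda+\nu}(\sqrt{\psi\chi})/K_{\lambda}(\sqrt{\psi\chi})$ and jump straight to the ratio of Bessel functions, but deriving it from the density keeps the argument self-contained and matches the probabilistic spirit of the rest of the paper.
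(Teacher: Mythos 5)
Your proof is correct and follows essentially the same route as the paper: compute the $\nu$-th moment as an integral against the GIG density (the paper simply asserts this evaluation as "easily computed," whereas you supply the standard $K_{\mu}$ integral representation and the closed form of $K_{1/2}$), then specialize to integer $\nu$ via (\ref{eq:Bessel}) and the duplication value of $\Gamma\left(n+\tfrac{1}{2}\right)$. All the constant bookkeeping checks out.
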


\begin{proof}
The $\nu-$th moment is easily computed as the integral
\[
\int_{0}^{+\infty}x^{\nu}f\left(x;1,z^{2},\frac{1}{2}\right)dx=\sqrt{\frac{2}{\pi}}\exp\left(z\right)z^{\nu+\frac{1}{2}}K_{\nu+\frac{1}{2}}\left(z\right)
\]
where $K_{k}$ is the Bessel function of the second kind of order
$k.$ In the case where $\nu=n$ is an integer, using the expression
(\ref{eq:Bessel}) of the Bessel function $K_{n+\frac{1}{2}}$ in
terms of the Bessel polynomial $q_{n}$, we obtain
\begin{eqnarray*}
\mathbb{E}X_{\frac{1}{2},z}^{n} & = & \sqrt{\frac{2}{\pi}}\exp\left(z\right)z^{n+\frac{1}{2}}K_{n+\frac{1}{2}}\left(z\right)=\sqrt{\frac{2}{\pi}}\exp\left(z\right)\Gamma\left(n+\frac{1}{2}\right)2^{n-\frac{1}{2}}\exp\left(-z\right)q_{n}\left(z\right)\\
 & = & \frac{2^{n}}{\sqrt{\pi}}\Gamma\left(n+\frac{1}{2}\right)q_{n}\left(z\right)
\end{eqnarray*}
with $\frac{\Gamma\left(n+1/2\right)}{\Gamma\left(1/2\right)}=\frac{1}{2^{2n}}\frac{\left(2n\right)!}{n!},$
hence the result.
\end{proof}

We now have the necessary tools to prove the following extension of
identity (\ref{eq:convolutionform}).
\begin{theorem}
If $\left\{ z_{i},\,1\le i\le m;\, m\ge2\right\} $ are complex numbers
then, with $z=\sum_{i=1}^{m}z_{i},$
\begin{equation}
\sum_{k_{1}+\dots+k_{m}=n}\prod_{i=1}^{m}\binom{2k_{i}}{k_{i}}q_{k_{i}}\left(z_{i}\right)=\sum_{k=0}^{n}\binom{2k}{k}\frac{2^{2n-2k}\left(\frac{m-1}{2}\right)_{n-k}}{\left(n-k\right)!}q_{k}\left(z\right).\label{eq:general Bessel}
\end{equation}
The special case where all $z_{i}$ are equal reads Brychkov's identity
(\ref{eq:convolutionform}).
\end{theorem}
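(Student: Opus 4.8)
The plan is to reinterpret both sides of (\ref{eq:general Bessel}) as scaled moments of sums of independent generalized inverse Gaussian variables, and then to exploit the distributional identity of Lemma \ref{lem:samedistribution}. First I would rewrite the integer moment formula (\ref{eq:moment}) in the convenient form
\[
\binom{2k}{k}q_k(z) = \frac{2^k}{k!}\,\mathbb{E}X_{\frac{1}{2},z}^k .
\]
Taking the variables $X_{\frac{1}{2},z_i}$ to be mutually independent and using $\sum_i k_i = n$, so that $\prod_i 2^{k_i} = 2^n$, the left-hand side of (\ref{eq:general Bessel}) becomes
\[
2^n \sum_{k_1+\dots+k_m=n} \frac{1}{\prod_i k_i!}\,\mathbb{E}\prod_{i=1}^m X_{\frac{1}{2},z_i}^{k_i} = \frac{2^n}{n!}\,\mathbb{E}\Bigl(\sum_{i=1}^m X_{\frac{1}{2},z_i}\Bigr)^n ,
\]
where the last equality is the multinomial theorem applied inside the expectation. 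I write $S = \sum_{i=1}^m X_{\frac{1}{2},z_i}$.

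Next I would determine the distribution of $S$. Applying Lemma \ref{lem:samedistribution} a total of $m-1$ times (a straightforward induction on $m$) yields
\[
S \sim X_{\frac{1}{2},z} + \sum_{j=1}^{m-1} X_{\frac{1}{2},0}^{(j)} ,
\]
with all summands independent and $z = \sum_i z_i$. Since $X_{\frac{1}{2},0}$ is a $\chi^2_1$ variable (equivalently, a Gamma variable with shape $1/2$ and scale $2$, as recorded after Lemma \ref{lem:samedistribution}), the sum $Y := \sum_{j=1}^{m-1} X_{\frac{1}{2},0}^{(j)}$ is a $\chi^2_{m-1}$ variable, that is, Gamma with shape $\frac{m-1}{2}$ and scale $2$, independent of $X_{\frac{1}{2},z}$. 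Expanding $\mathbb{E}S^n = \mathbb{E}(Y + X_{\frac{1}{2},z})^n$ by the binomial theorem and using independence together with the Gamma moments $\mathbb{E}Y^{n-k} = 2^{n-k}\bigl(\frac{m-1}{2}\bigr)_{n-k}$ and the moment formula (\ref{eq:moment}) for $\mathbb{E}X_{\frac{1}{2},z}^k$, the prefactor $\frac{2^n}{n!}\,\mathbb{E}S^n$ collapses, after cancellation of $n!$ and the powers of $2$, exactly to the right-hand side of (\ref{eq:general Bessel}).

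The only genuine subtlety is that the probabilistic argument requires $z_i > 0$ real for the densities to be defined, whereas the theorem is stated for complex $z_i$. I would dispose of this at the end by a polynomial-identity argument: both sides of (\ref{eq:general Bessel}) are polynomials in $(z_1,\dots,z_m)$, and two polynomials agreeing on the open positive orthant agree identically, so the identity extends to all complex values. I expect the principal effort — which is bookkeeping rather than a conceptual obstacle — to be the final algebraic collapse of the binomial coefficients, factorials, and powers of $2$ into the stated closed form.
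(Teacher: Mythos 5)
Your proof is correct and follows essentially the same route as the paper: interpret both sides as $n$-th moments via $\mathbb{E}X_{\frac{1}{2},z}^{k}=2^{-k}\frac{(2k)!}{k!}q_{k}(z)$ and invoke the distributional identity $\sum_{i}X_{\frac{1}{2},z_{i}}\sim X_{\frac{1}{2},z}+X_{\frac{m-1}{2},0}$ together with the Gamma moments of the latter term. You in fact supply two details the paper leaves implicit --- the general-$m$ case handled directly through the multinomial theorem (the paper writes out only $m=2$ and leaves the rest to the reader) and the extension from positive real to complex $z_{i}$ by the polynomial-identity argument --- and both check out.
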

\begin{proof}
We consider first the case $m=2.$ As a consequence of Lemma \ref{lem:samedistribution},
the moments of $X_{\frac{1}{2},z_{1}}+X_{\frac{1}{2},z_{2}}$ are
the same as the moments of $X_{\frac{1}{2},0}+X_{\frac{1}{2},z_{1}+z_{2}}.$
Using the binomial expansion, the moment of order $n$ of $X_{\frac{1}{2},z_{1}}+X_{\frac{1}{2},z_{2}}$
yields 
\begin{eqnarray*}
\mathbb{E}\left(X_{\frac{1}{2},z_{1}}+X_{\frac{1}{2},z_{2}}\right)^{n} & = & \sum_{k=0}^{n}\binom{n}{k}\mathbb{E}\left(X_{\frac{1}{2},z_{1}}\right)^{k}\mathbb{E}\left(X_{\frac{1}{2},z_{2}}\right)^{n-k}\\
 & = & \sum_{k=0}^{n}\binom{n}{k}\frac{1}{2^{k}}\frac{\left(2k\right)!}{k!}q_{k}\left(z_{1}\right)\frac{1}{2^{n-k}}\frac{\left(2n-2k\right)!}{\left(n-k\right)!}q_{n-k}\left(z_{2}\right)\\
 & = & \frac{n!}{2^{n}}\sum_{k=0}^{n}\binom{2k}{k}\binom{2n-2k}{n-k}q_{k}\left(z_{1}\right)q_{n-k}\left(z_{2}\right).
\end{eqnarray*}
The same approach applied to $X_{\frac{1}{2},0}+X_{\frac{1}{2},z_{1}+z_{2}}$
gives
\begin{eqnarray*}
\mathbb{E}\left(X_{\frac{1}{2},0}+X_{\frac{1}{2},z_{1}+z_{2}}\right)^{n} & = & \sum_{k=0}^{n}\binom{n}{k}2^{k}\frac{\Gamma\left(k+\frac{1}{2}\right)}{\Gamma\left(\frac{1}{2}\right)}\frac{1}{2^{n-k}}\frac{\left(2n-2k\right)!}{\left(n-k\right)!}q_{n-k}\left(z_{1}+z_{2}\right)\\
 & = & \frac{1}{2^{n}}\sum_{k=0}^{n}\binom{n}{k}2^{2k}\frac{1}{2^{2k}}\frac{\left(2k\right)!}{k!}\frac{\left(2n-2k\right)!}{\left(n-k\right)!}q_{n-k}\left(z_{1}+z_{2}\right)\\
 & = & \frac{n!}{2^{n}}\sum_{k=0}^{n}\binom{2k}{k}\binom{2n-2k}{n-k}q_{k}\left(z_{1}+z_{2}\right),
\end{eqnarray*}
which yields the result. The extension to the case of an arbitrary
integer value $m>2$ is left to the reader, using the following elementary
extension of identity (\ref{eq:convolution1/2}):
\begin{lemma}
If $\left\{ z_{i},\,1\le i\le m;\, m\ge2\right\} $ are real positive
numbers and $X_{i}$ are independent random variables, then the following
identity in distribution holds
\[
\sum_{i=1}^{m}X_{\frac{1}{2},z_{i}}\sim X_{\frac{1}{2},\sum_{i=1}^{m}z_{i}}+X_{\frac{m-1}{2},0}.
\]
\end{lemma}
\end{proof}
We remark that $X_{\frac{m-1}{2},0}$ is distributed as a Gamma random
variable with scale parameter $2$ and shape parameter $\frac{m-1}{2}$ (or equivalently a chi random variable with $m-1$ degrees of freedom). 

\section{links to Laguerre polynomials}
We note that the Laguerre polynomials $L_{n}^{\left(\mu\right)}\left(z\right)$
are related to the Bessel polynomials $q_{n}\left(z\right)$ as
\[
q_{n}\left(z\right)=\frac{\left(-1\right)^{n}}{\binom{2n}{n}}L_{n}^{\left(-2n-1\right)}\left(2z\right);
\]
as a consequence, an equivalent statement of identity (\ref{eq:general Bessel})
in terms of Laguerre polynomials reads, with $z=\sum_{i=1}^{m} z_{i},$
\[
\sum_{k_{1}+\dots+k_{m}=n}\prod_{i=1}^{m}L_{k_{i}}^{\left(-2k_{i}-1\right)}\left(z_{i}\right)=\sum_{k=0}^{n}2^{2n-2k}\frac{\left(\frac{m-1}{2}\right)_{n-k}}{\left(n-k\right)!}\left(-4\right)^{n-k}L_{k}^{\left(-2k-1\right)}\left(z\right).
\]
The special case $m=2$ and $z_{1}=-z_{2}=z$ of this identity can
be found in \cite[4.4.2.9]{Prudnikov} as
\[
\sum_{k=0}^{n}L_{k}^{\left(-2k-1\right)}\left(z\right)L_{n-k}^{\left(-2n+2k-1\right)}\left(-z\right)=\left(-4\right)^{n}.
\]

\section{links to other Bessel polynomials}

In this section, we show that two other families of Bessel polynomials
can be interpreted as moments in the same way as identity (\ref{eq:moment}).
We deduce from these representations some identities equivalent to
(\ref{eq:general Bessel}) but that have a more simple form.

\subsection{Bessel $\theta_{n}$ polynomials}

In his textbook \cite{Grosswald}, Grosswald considers the following
Bessel polynomials, called \textit{reverse Bessel polynomials}
\begin{equation}
\theta_{n}\left(z\right)=\frac{\left(2n\right)!}{n!}2^{-n}q_{n}\left(z\right).\label{eq:theta_q}
\end{equation}
For example,
\[
\theta_{0}\left(z\right)=1,\,\,\theta_{1}\left(z\right)=1+z,\,\,\theta_{2}\left(z\right)=3+3z+z^{2}.
\]
From identity (\ref{eq:moment}), we deduce
\[
\theta_{n}\left(z\right)=\mathbb{E}X_{\frac{1}{2},z}^{n}.
\]
Moreover, the identity (\ref{eq:general Bessel}) reads in terms of
these polynomials
\[
\sum_{k_{1}+\dots+k_{m}=n}\prod_{i=1}^{m}\frac{\theta_{k_{i}}\left(z_{i}\right)}{k_{i}!}=\sum_{k=0}^{n}\frac{2^{n-k}\left(\frac{m-1}{2}\right)_{n-k}}{\left(n-k\right)!}\frac{\theta_{k}\left(z\right)}{k!}.
\]
with $z=\sum_{i=1}^{m} z_{i}.$
The case $m=2$ appears in \cite[eqn. (5.4)]{Carlitz}.

\subsection{Bessel $f_{n}$ polynomials}

The third family of Bessel polynomials $f_{n}$ is defined by L. Carlitz
\cite{Carlitz} as
\begin{equation}
f_{n}\left(z\right)=z\theta_{n-1}\left(z\right);\,\, n\ge1,\label{eq:f_theta}
\end{equation}
and $f_{0}\left(z\right)=1.$ First examples are
\[
f_{1}\left(z\right)=z;\,\, f_{2}\left(z\right)=z+z^{2};\,\, f_{3}\left(z\right)=3z+3z^{2}+z^{3}.
\]
Note that as in the case of the reverse Bessel polynomials, these
polynomials have their highest degree coefficient equal to $1.$ 

Since it can be easily checked from (\ref{eq:GIG}) that the Generalized Inverse Gaussian density satisfies the functional equation
\[
xf\left(x;1,z^{2},-\frac{1}{2}\right)=zf\left(x;1,z^{2},+\frac{1}{2}\right),
\]
we deduce
\[
z\mathbb{E}X_{\frac{1}{2},z}^{n-1}=\mathbb{E}X_{-\frac{1}{2},z}^{n}
\]
so that
\[
f_{n}\left(z\right)=\mathbb{E}X_{-\frac{1}{2},z}^{n},\,\,\forall n\ge1.
\]
 Since moreover $f_{0}\left(z\right)=1,$ this representation holds
in fact $\forall n\ge0.$ By the stability property (\ref{eq:stability}),
we deduce that the polynomials $f_{n}$ satisfy the multinomial property
\begin{equation}
\sum_{k_{1}+\dots+k_{m}=n}\prod_{i=1}^{m}\frac{f_{k_{i}}\left(z_{i}\right)}{k_{i}!}=\frac{f_{n}\left(z\right)}{n!}
\label{eq:f_multi}
\end{equation}
with $z=\sum_{i=1}^{m} z_{i}.$
The case $m=2$ is given in \cite[eqn. (2.7)]{Carlitz}. 

Moreover, since, by
(\ref{eq:f_theta}), (\ref{eq:theta_q}) and (\ref{eq:Bessel}), the
polynomials $f_{n}$ are related to the Bessel functions $K_{n}$
as
\[
f_{n}\left(z\right)=\sqrt{\frac{2}{\pi}}z^{n+\frac{1}{2}}K_{n-\frac{1}{2}}\left(z\right),
\]
we deduce an equivalent version of (\ref{eq:f_multi})
\[
\sum_{k_{1}+\dots+k_{m}=n}\prod_{i=1}^{m}\frac{z_{i}^{k_{i}+\frac{1}{2}}}{k_{i}!}K_{k_{i}-\frac{1}{2}}\left(z_{i}\right)=\left(\frac{2}{\pi}\right)^{\frac{1-m}{2}}\frac{\left(z\right)^{n+\frac{1}{2}}}{n!}K_{n-\frac{1}{2}}\left(z\right)
\]
with $z=\sum_{i=1}^{m} z_{i},$
which can be found in \cite[5.18.1.3]{Brychkovbook}.

\section{Conclusion}
We have shown that an identity introduced by Brychkov using generating
functions can be interpreted as a multiplication identity for several types of Bessel polynomials.
Moreover, we have given a probabilistic background to this identity
and exhibited its relationship with the Generalized Inverse Gaussian density. 
Other tools, such that generating functions, can certainly replace this probabilistic approach; however, we found that it is particularly convenient in this context.

As a final illustration of this efficiency of this tool, we derive a quick proof of a famous Tur\'{a}n-type inequality for Bessel $K_{\nu}$ functions: in \cite{Ismail}, Ismail and Muldoon proved that the function
\[
\nu \mapsto \frac{K_{\nu+b}\left(z \right)}{K_{\nu}\left(z \right)}
\]
is increasing, implying that the function
\[
\nu \mapsto K_{\nu}\left(z \right)
\]
is log-convex. As a consequence, 
the following Tur\'{a}n-type inequality - as named
by Karlin and Szeg\"{o} - for Bessel functions holds: for $\frac{x}{p}$ and
$\frac{y}{q}>-\frac{1}{2},$ and with $\frac{1}{p}+\frac{1}{q}=1,$
\[
K_{\frac{x}{p}+\frac{y}{q}}\left(z\right) \le K_{x}\left(z\right)^{\frac{1}{p}}K_{y}\left(z\right)^{\frac{1}{q}}.
\]
The case $p=q=2$ of this inequality is also derived in \cite{Laforgia} from the following generalization of the
Cauchy-Schwarz inequality
\[
\int_{a}^{b} g(t)f^{m}(t)dt\,\, \times \int_{a}^{b} g(t)f^{n}(t)dt \ge \left(\int_{a}^{b} g(t)f^{\frac{m+n}{2}}(t)dt\right)^{2}
\]

The probabilistic approach based on the moment representation
(\ref{eq:moment-2}) allows to derive a straightforward proof
of this result that does not require such a refinement:
choosing a random variable $X_{\frac{1}{2},z}$
and applying the standard H\"{o}lder inequality
\[
\mathbb{E}\left(Z_{1}Z_{2}\right) \le \left[ \mathbb{E}Z_{1}^{p}\right]^{\frac{1}{p}}  \left[ \mathbb{E}Z_{2}^{q}\right]^{\frac{1}{q}}
\]
with $\frac{1}{p}+\frac{1}{q}=1$ and $Z_{1}=X_{\frac{1}{2},z}^{\nu/2-1/4}$ and $Z_{2}=X_{\frac{1}{2},z}^{\mu/2-1/4},$
we deduce the result.

%
%

\label{lastpage}


\begin{thebibliography}{12}
\markboth{Taylor \& Francis and I.T. Consultant}{Integral Transforms and Special Functions}

\bibitem[1]{Brychkov}Yu. A. Brychkov, {\em On multiple sums of special functions},
Integral Transforms and Special Functions, 21-12, 877-884, 2010

\bibitem[2]{Jorgensen}B. J\o rgensen, {\em Statistical Properties of the Generalized
Inverse Gaussian distribution}, Lecture Notes in Statistics, 9, Springer-Verlag,
1982

\bibitem[3]{Brychkovbook}Yu. A. Brychkov, {\em Handbook of Special Functions
Derivatives, Integrals, Series and Other Formulas}, Chapman \& Hall/CRC,
2008

\bibitem[4]{Prudnikov}A.P. Prudnikov, Yu. A. Brychkov and O.I. Marychev,
{\em Integrals and Series}, Volume 2, Gordon and Breach, 1986

\bibitem[5]{Grosswald}E. Grosswald, {\em Bessel Polynomials}, Lecture Notes
in Mathematics 698, 1978, Springer-Verlag

\bibitem[6]{Carlitz}L. Carlitz, {\em A note on the Bessel polynomials}, Duke
Math. J. Volume 24, Number 2 (1957), 151-162.

\bibitem[7]{Butler}R. W. Butler, Generalized Inverse Gaussian Distributions
and their Wishart Connections, Scandinavian Journal of Statistics,
25-1, 69-75, 1998

\bibitem[8]{Laforgia}A. Laforgia and P. Natalini, On some Turn-type
inequalities, Journal of Inequalities and Applications, Article 29828,
2006

\bibitem[9]{Ismail}M.E.H. Ismail and M.E. Muldoon, Monotonicity of the zeros of a cross-product of Bessel 
functions, SIAM Journal on Mathematical Analysis 9(4) (1978) 759–767

\bibitem[10]{Baricz}A. Baricz, Tur\'{a}n type inequalities for some probability density functions, Studia Scientiarum Mathematicarum Hungarica, 47-2, 175-189, June 2010

\end{thebibliography}
\end{document}